\newtheorem{theorem}{Theorem}[section]
\newtheorem{lemma}[theorem]{Lemma}
\theoremstyle{definition}
\newtheorem*{remark}{Remark}
\newcommand{\fld}{\mathbb{F}}
\begin{document}
\title{An explicit approach to the Ahlgren-Ono conjecture}
\author{Geoffrey D. Smith}
\address{Department of Mathematics, Yale University, 10 Hillhouse Avenue New Haven, Connecticut, 06511}
\email{geoffrey.smith@yale.edu}
\author{Lynnelle Ye}
\address{Department of Mathematics, Harvard University, Cambridge, MA 02138}
\email{lye@g.harvard.edu}

\begin{abstract}
Let $p(n)$ be the partition function. Ahlgren and Ono conjectured that every arithmetic progression contains infinitely many integers $N$ for which $p(N)$ is not congruent to $0\pmod{3}$. Radu proved this conjecture in 2010 using work of Deligne and Rapoport. In this note, we give a simpler proof of Ahlgren and Ono's conjecture in the special case where the modulus of the arithmetic progression is a power of $3$ by applying a method of Boylan and Ono and using work of Bella\"iche and Khare generalizing Serre's results on the local nilpotency of the Hecke algebra.
\end{abstract}

\maketitle

\section{Introduction and statement of results}\label{s1}

A \emph{partition} of a nonnegative integer $n$ is a non-increasing sequence of positive integers whose sum is $n$. The partition function $p(n)$ then counts the number of distinct partitions of $n$. The generating function for $p(n)$ was shown by Euler to be
\[
\sum_{n=0}^\infty p(n)q^n=\prod_{n=1}^\infty\frac1{1-q^n}=1+q+2q^2+3q^3+5q^4+7q^5+\dotsb.
\]
Ramanujan famously observed the congruences $p(5k+4)\equiv 0 \pmod 5$, $p(7k+5)\equiv 0 \pmod 7$, and $p(11k+6)\equiv 0\pmod{11}$. More recently, Ono in \cite{ono-annals} and Folsom, Kent, and Ono in \cite{folsom-kent-ono}
have used Serre's theory of modular forms modulo $p$ to prove general results about the $p$-adic behavior of $p(n)$ for all $p\geq 5$. As a consequence, the behavior of $p(n)$ is well understood modulo $p$ for all $p\geq5$.


The behavior of $p(n)$ modulo 2 and 3 is far less well understood. It is widely believed that $p(n)$ is equidistributed modulo $2$ and $3$, but little is known. Subbarao conjectured in~\cite{subbarao} that for any arithmetic progression $B\pmod{A}$, there are infinitely many integers $N\equiv B\pmod{A}$ for which $p(N)$ is even, and also infinitely many such $N$ for which $p(N)$ is odd. In~\cite{ahlgren-ono}, Ahlgren and Ono conjectured that for any arithmetic progression $B\pmod{A}$, there are infinitely many integers $N\equiv B\pmod{A}$ for which $p(N)\not\equiv0\pmod{3}$. 

Ono \cite{ono} established half of Subbarao's conjecture, proving that there are infinitely many $N$ in every arithmetic progression for which $p(n)$ is even. Boylan and Ono \cite{boylan-ono} then used the local nilpotency of the Hecke algebra, as observed by Serre in \cite[p. 115]{serre}, to prove the odd case of Subbarao's conjecture in the case $A=2^s$. Radu proves the full conjecture, along with Ahlgren and Ono's for modulo $3$, in~\cite{radu}, using work \cite{deligne-rapoport} of Deligne and Rapoport that applies the structure of the Tate curve to study the Fourier coefficients of modular forms.

In this note, we adapt the method of Boylan and Ono to provide a simpler, more explicit proof of Ahlgren and Ono's conjecture in the case $A=3^s$. We rely on Bella\"iche and Khare's generalization \cite{bellaiche-khare} of Serre's explicit description \cite{nicolas-serre1, nicolas-serre2} of the Hecke algebra for modulo $2$ reductions of level 1 modular forms. We are then able to show the following.

\begin{theorem}\label{mainthm3}
Let $s$ and $a$ be positive integers. There are infinitely many positive integers $n\equiv a\pmod {3^s}$ such that $p(n)\not\equiv 0\pmod 3$.
\end{theorem}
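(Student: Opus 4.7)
The strategy is to adapt the argument of Boylan and Ono for Subbarao's conjecture modulo $2$ to the modulo $3$ setting, with the role of Nicolas and Serre's mod $2$ nilpotency played by Bella\"iche and Khare's theorem for mod $3$. The first step is to realize the partition generating function as a mod $3$ modular form. From the Frobenius congruence $\eta(z)^3 \equiv \eta(3z) \pmod 3$ one obtains
\[
\sum_{n \geq 0} p(n) q^n \equiv q^{1/24}\, \frac{\eta(z)^2}{\eta(3z)} \pmod 3,
\]
and after the standard substitution $z \mapsto 24 z$, together with multiplication by a suitable $\eta$-power to clear the fractional exponent, the right-hand side is the reduction modulo $3$ of a holomorphic modular form $F$ on some congruence subgroup whose Fourier coefficients encode $p(n) \pmod 3$.

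For fixed positive integers $a$ and $s$, the plan is to construct a sifting operator $\Pi_{a,s}$, built from the $U_3$ and $V_3$ operators together with multiplication by suitable $\eta$-products, that retains only the Fourier coefficients of index $\equiv a \pmod{3^s}$. Iterating the elementary identity $f = V_3 U_3 f + (f - V_3 U_3 f)$ and its shifted analogues allows any residue class modulo $3^s$ to be isolated. Careful tracking of weights and levels, combined with repeated applications of $U_3$ to push the level back down to $1$, shows that $\Pi_{a,s}(F) \pmod 3$ ultimately lies in the space of level $1$ mod $3$ modular forms on which Bella\"iche and Khare's nilpotency theorem operates.

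Now suppose for contradiction that only finitely many $n \equiv a \pmod{3^s}$ satisfy $p(n) \not\equiv 0 \pmod 3$; then $\Pi_{a,s}(F) \pmod 3$ has polynomial $q$-expansion. By Bella\"iche and Khare, sufficiently many applications of Hecke operators $T_\ell$ in the appropriate maximal ideal annihilate any mod $3$ level $1$ form; tracing the effect of these operators on the polynomial $q$-expansion forces $\Pi_{a,s}(F) \equiv 0 \pmod 3$, and hence $p(n) \equiv 0 \pmod 3$ for \emph{every} $n \equiv a \pmod{3^s}$. A direct numerical computation producing a single $n_0$ in the progression with $p(n_0) \not\equiv 0 \pmod 3$ then yields the desired contradiction; such an $n_0$ can be located via Euler's pentagonal recursion, or more uniformly by exploiting Hecke symmetry together with the existence of at least one residue class modulo $3^s$ known to contain some $n$ with $p(n) \not\equiv 0 \pmod 3$.

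The main obstacle will be the sifting step: one must verify that $\Pi_{a,s}(F)$, after all the necessary manipulations with $U_3$, $V_3$, and $\eta$-twists, lies in the precise space of level $1$ mod $3$ forms where Bella\"iche and Khare's nilpotency applies. A secondary subtlety is combining the nilpotency statement with the polynomial constraint to rule out nontrivial polynomials that happen to lie in the kernel of many Hecke operators. This is the analogue of the delicate bookkeeping in the mod $2$ argument of Boylan and Ono, and it is here that the explicit description of the mod $3$ Hecke algebra from Bella\"iche and Khare plays the decisive role.
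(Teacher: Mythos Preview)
Your outline diverges from the paper's argument and, as written, contains a genuine gap.

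The paper does not sift the partition series into residue classes and then argue by contradiction. Instead it works from the outset with the level~$1$ cusp form $\Delta^{(9^s-1)/8}$, whose coefficients $a_s(n)$ are related to the $p(n)$ by an explicit congruence (Lemma~\ref{recursion}). Bella\"iche--Khare is then used \emph{constructively}: one chooses primes $p_1,\dots,p_k,q_1,\dots,q_\ell$ so that $T'_{p_1}\cdots T'_{p_k}T'_{q_1}\cdots T'_{q_\ell}\Delta^{(9^s-1)/8}\equiv\pm\Delta$, and the well-understood nonvanishing of $\tau(n_0)\pmod 3$ (via $\tau(n)\equiv\sigma_1(n)$) is pulled back to nonvanishing of $a_s$ at controlled indices, hence of $p(n)$ in every residue class mod $9^s$. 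No sifting or level-changing is needed because the form is level~$1$ from the start.

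The decisive gap in your plan is the final step. Having reduced to the claim ``$p(n)\equiv 0\pmod 3$ for every $n\equiv a\pmod{3^s}$'', you propose to finish by exhibiting a single $n_0\equiv a\pmod{3^s}$ with $p(n_0)\not\equiv 0$. But by Lemma~\ref{manyfromone} this single-$n_0$ statement for all $(a,s)$ is \emph{equivalent} to the theorem you are trying to prove, so appealing to a ``direct numerical computation'' is circular for general $s$, and the vague invocation of ``Hecke symmetry'' to move nonvanishing between residue classes is exactly the content that the paper's constructive argument supplies. Your use of nilpotency is also misdirected: nilpotency asserts that enough Hecke operators annihilate \emph{every} form, so it cannot by itself distinguish a polynomial $q$-expansion from a general one; if $\Pi_{a,s}(F)$ really were a level~$1$ cusp form with finite $q$-expansion it would be zero for elementary reasons, and the hard part is not that deduction but producing the contradiction afterwards. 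Finally, the sifting step you flag as ``the main obstacle'' is a real one: isolating a residue class mod $3^s$ via $U_3,V_3$ or character twists raises the level, and there is no evident mechanism to return to level~$1$ where Theorem~\ref{bk} applies.
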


In Section~\ref{nilpotency}, we discuss Bella\"iche and Khare's work. In Section~\ref{proofmain}, we prove Theorem~\ref{mainthm3}.

\section{Local nilpotency}
\label{nilpotency}

Throughout, let $\Delta=\Delta(z)$ denote the discriminant modular form, and $\tau(n)$ denote the Fourier coefficients of $\Delta$. Let $p$ be a prime number, and let $T_p$ be the $p$th Hecke operator, which by definition acts on a modular form $f(z)=\sum_{n=0}^\infty a(n)q^n$ of weight $k$ by
\[
T_p f(z)=\sum_{n=0}^\infty(a(np)+p^{k-1}a(n/p))q^n
\]
where $a(x)=0$ if $x\notin\mathbb{Z}$. In~\cite{nicolas-serre1}, Nicolas and Serre compute for each $f$ the minimal integer $g=g(f)$ such that for any $g$ primes $p_1,\dotsc,p_g$, we have
\[
T_{p_1}\dotsb T_{p_g} f(z)\equiv0\pmod{2}.
\]
The statement that the algebra generated by the Hecke operators is locally nilpotent modulo $2$ just means that $g(f)$ is finite for every $f$. Additionally, Nicolas and Serre prove in~\cite{nicolas-serre2} that each $T_p$ can be written as a power series in $T_3$ and $T_5$, thereby permitting the enumeration of all sets of primes $p_1,\dotsc,p_{g-1}$ such that $T_{p_1}\dotsb T_{p_{g-1}} f(z)$ is \emph{not} zero modulo $2$. As a result of this description of the Hecke algebra modulo $2$, Boylan and Ono's method in~\cite{boylan-ono} becomes completely explicit.

Since our goal is to recreate Boylan and Ono's work in the modulo $3$ case, we replace Nicolas and Serre's conclusions with the following extension by Bella\"iche and Khare. Write $T_p'=T_p$ if $p\equiv2\pmod3$, and $T_p'=1+T_p$ if $\ell\equiv1\pmod3$. Then the operators $T_p'$ act locally nilpotently on the ring $S(\mathbb{F}_3)$ of level 1 cusp forms with integer coefficients taken modulo 3, in the sense that given such a modular form $f$, there is some minimal integer $g$ such that $T'_{p_1}\cdots T'_{p_g} f=0$ for any sequence of Hecke operators $T_{p_1}',\ldots, T_{p_g}'$. As such, since $\Delta$ and $-\Delta$ are the only cusp forms modulo 3 satisfying $T_p'\vert f=0$ for all $p$, we have some maximal sequence $p_1,\ldots, p_{g-1}$ such that $T_{p_1}'\cdots T_{p_{g-1}}' f=\pm\Delta$. Moreover, we have the following description of the Hecke algebra on modular forms with coefficients in $\mathbb F_3$.

\begin{theorem}[\cite{bellaiche-khare}, Theorem 24]\label{bk}


 The algebra of Hecke operators on $S(\fld_3)$ is isomorphic to the power series ring $\fld_3[[x,y]]$, with an isomorphism given by sending $T_2'=T_2$ to $x$ and $T_7'=1+T_7$ to $y$. 
 Assuming this identification, we have $T_p'\equiv x\pmod{(x,y)^2}$ if and only if $p$ is congruent to $2\pmod3$ but not $8\pmod9$, 
$T_p'\equiv y\pmod{(x,y)^2}$ if and only if $p$ is congruent to $1\pmod3$ and not split in the splitting field of $X^3-3$, and otherwise $T_p'\equiv 0\pmod{(x,y)^2}$.
\end{theorem}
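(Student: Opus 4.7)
The plan is to identify $\mathbb{T} := \mathbb{T}(S(\mathbb{F}_3))$ as a complete local $\mathbb{F}_3$-algebra, exhibit a surjection $\mathbb{F}_3[[x,y]] \twoheadrightarrow \mathbb{T}$ sending $x \mapsto T_2$ and $y \mapsto 1+T_7$, prove that this surjection is an isomorphism, and separately compute each $T_p'$ modulo $(x,y)^2$.

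The first step follows from what the paper has already observed: since $\pm\Delta$ are the only cusp forms in $S(\mathbb{F}_3)$ killed by every $T_p'$, the closed ideal $\mathfrak{m}$ generated by the $T_p'$ is the unique maximal ideal of $\mathbb{T}$, and the local nilpotency statement attributed to Bella\"iche-Khare yields $\mathfrak{m}$-adic completeness of $\mathbb{T}$.

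For the surjection, it suffices to show that $T_2$ and $1+T_7$ span the cotangent space $\mathfrak{m}/\mathfrak{m}^2$. I would attach to the unique surviving Hecke eigensystem the residual Galois representation $\bar\rho_\Delta \cong \mathbf{1} \oplus \bar\chi \pmod{3}$, using Serre's congruence $\tau(p) \equiv 1+p^{11} \pmod{3}$, and analyze the universal pseudodeformation ring $R$ of the associated trace subject to the usual conditions (unramified outside $3$, odd, and with determinant of the right weight). A global Euler-characteristic computation, together with Kummer-theoretic analysis of $H^1$ in the fields $\mathbb{Q}(\zeta_9)$ and $\mathbb{Q}(\zeta_3,\sqrt[3]{3})$, should output a two-dimensional tangent space whose two coordinate directions are cut out precisely by those two fields. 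A modularity argument of $R=T$ flavor (combined with matching Hilbert series in each $\mathfrak{m}^k/\mathfrak{m}^{k+1}$) would then promote the surjection $\mathbb{F}_3[[x,y]] \twoheadrightarrow \mathbb{T}$ to an isomorphism.

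Finally, the explicit identification of $T_p' \pmod{(x,y)^2}$ reduces to Chebotarev applied to the two coordinate characters on $\mathrm{Frob}_p$: the character factoring through $\mathrm{Gal}(\mathbb{Q}(\zeta_9)/\mathbb{Q})/\{\pm 1\}$ detects the $x$-coordinate and is nonzero exactly on the residues $\{2,5\}\pmod 9$, matching the ``$p\equiv 2\pmod 3$ but not $8\pmod 9$'' condition; the character factoring through the $S_3$-quotient $\mathrm{Gal}(\mathbb{Q}(\zeta_3,\sqrt[3]{3})/\mathbb{Q})$ detects the $y$-coordinate and is nonzero exactly on primes $p\equiv 1 \pmod 3$ that fail to split completely in that field. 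I expect the main obstacle to be the isomorphism step rather than the tangent-space calculation: showing that no extra relations appear in $\mathbb{T}$ beyond those forced at the tangent level requires producing enough linearly independent elements of $S(\mathbb{F}_3)$ — typically via iterated $\theta$-operators or iterated Hecke action on $\Delta$ — to match $\dim_{\mathbb{F}_3}\mathbb{F}_3[[x,y]]/(x,y)^k = \binom{k+1}{2}$ in every degree, and this bookkeeping is the heart of what a proof along Bella\"iche-Khare's lines must carry out.
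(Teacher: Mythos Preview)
The paper does not prove this theorem. It is quoted verbatim as Theorem~24 of Bella\"iche--Khare and used as a black box; no argument, sketch, or indication of proof is given anywhere in the paper. Consequently there is no ``paper's own proof'' against which to compare your proposal.

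Your outline is a reasonable high-level plan of the Bella\"iche--Khare argument itself (pseudodeformation ring, tangent-space computation via the two $\mathbb{F}_3$-extensions $\mathbb{Q}(\zeta_9)$ and $\mathbb{Q}(\zeta_3,\sqrt[3]{3})$, an $R=\mathbb{T}$ step, and Chebotarev for the explicit image of $T_p'$ in $\mathfrak{m}/\mathfrak{m}^2$). But as you yourself flag, it is a plan rather than a proof: the isomorphism step is not carried out, and the Hilbert-series matching you allude to is precisely the substantive content of the cited paper. For the purposes of the present paper nothing more is needed, since the result is simply imported.
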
 
This theorem in particular implies that for any nonzero $f\in S(\fld_3)$ there are some positive integers $k$ and $\ell$ such that $(T_{2}')^k (T_7')^\ell  f=\pm\Delta$. We may now proceed according to Boylan and Ono's strategy.

\section{Partitions modulo 3} \label{proofmain}
In this section we prove Theorem \ref{mainthm3}. We start by proving a basic lemma, similar to Corollary 1.4 of \cite{radu}.
\begin{lemma}
\label{manyfromone}
Suppose that for every $s$, every arithmetic progression modulo $3^s$ contains at least one $N$ such that $p(N)\not\equiv0\pmod{3}$. Then  for every $s$, every arithmetic progression modulo $3^s$ contains infinitely many such $N$.
\end{lemma}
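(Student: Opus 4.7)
The plan is a standard pigeonhole upgrade. I would argue by contradiction: assume that for some $s$ and some residue $a$, the progression $a\pmod{3^s}$ contains only finitely many $N$ with $p(N)\not\equiv0\pmod 3$, and let $N_0$ be the largest such $N$. Then every $N>N_0$ with $N\equiv a\pmod{3^s}$ satisfies $p(N)\equiv0\pmod 3$.

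Next I would refine the modulus. For any $t\geq 1$, the $3^t$ arithmetic progressions
\[
a+3^s k \pmod{3^{s+t}}, \qquad k=0,1,\ldots,3^t-1,
\]
are pairwise disjoint and each is contained in the progression $a\pmod{3^s}$. By the standing hypothesis applied to the modulus $3^{s+t}$, each of these $3^t$ progressions contains at least one witness $N_k$ with $p(N_k)\not\equiv 0\pmod 3$. Because $N_k\equiv a\pmod{3^s}$, the maximality of $N_0$ forces $N_k\leq N_0$. Since the progressions are disjoint modulo $3^{s+t}$, the $N_k$ are pairwise distinct, giving $3^t$ distinct positive integers bounded by $N_0$. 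Taking $t$ so large that $3^t>N_0$ produces a contradiction, and the lemma follows.

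There is really no main obstacle here; the whole argument is a counting/pigeonhole step, and the only thing to take care of is that the hypothesis is quantified over all $s$, so we are free to pass from modulus $3^s$ up to modulus $3^{s+t}$ for arbitrarily large $t$. I would write the proof in three or four lines along the lines above.
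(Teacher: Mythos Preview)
Your proof is correct and follows essentially the same contrapositive strategy as the paper: assume some progression modulo $3^s$ has only finitely many witnesses, then pass to modulus $3^{s+t}$ for large $t$ to contradict the standing hypothesis. The only cosmetic difference is that the paper directly exhibits a single residue class modulo $3^{s+t}$ whose smallest nonnegative representative already exceeds the threshold (hence contains no witness), while you instead count that the $3^t$ sub-progressions would require $3^t$ distinct witnesses below $N_0$; both executions are one-line variants of the same idea.
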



\begin{proof}
We prove the contrapositive. Suppose there exist some $r$ and $s$, with $0\le r<3^s$, for which there are only finitely many $N\equiv r\pmod{3^s}$ with $p(N)\not\equiv0\pmod3$. Then there is some $a_0\ge0$ such that $p(3^sa+r)\equiv0\pmod3$ for all $a\ge a_0$. Let $t$ be such that $3^t>3^sa_0+r$. Then we have 
\[
p(3^s\cdot 3^ta+3^sa_0+r)\equiv0\pmod3
\]
for all $a\ge0$, from which we conclude that $p(N)\equiv0\pmod3$ for all $N\equiv 3^sa_0+r\pmod{3^{s+t}}$.
\end{proof}
Let the integers $a_s(n)$ be defined by the generating function $\sum_{n=0}^{\infty}a_s(n)q^n=\Delta^{\frac{9^s-1}8}$, and let $r_s(j)$ be defined by $1+\sum_{n=1}^\infty r_s(j)q^{3\cdot9^sj}=\prod_{n=1}^\infty(1-q^{3\cdot9^s})$. Our next lemma is similar to Lemma 2.1 of~\cite{boylan-ono}.

\begin{lemma}
\label{recursion}
We have
\[
a_s(n)\equiv p\left(\frac{n-\frac{9^s-1}8}{3}\right)+\sum_{j=1}^\infty r_s(j)p\left(\frac{n-\frac{9^s-1}8}3-9^sj\right)\pmod3.
\]
\end{lemma}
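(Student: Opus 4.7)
The plan is to manipulate the generating function for $\Delta^{(9^s-1)/8}$ modulo $3$ until the partition generating function appears, then read off coefficients.

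First I would use Euler's formula $\Delta = q\prod_{n\ge 1}(1-q^n)^{24}$ to write
\[
\Delta^{(9^s-1)/8} = q^{(9^s-1)/8}\prod_{n\ge 1}(1-q^n)^{3(9^s-1)},
\]
noting that $9^s\equiv 1\pmod 8$ so the exponent $(9^s-1)/8$ is a nonnegative integer. Next I would apply the mod-$3$ Frobenius identity $(1-q^n)^3\equiv 1-q^{3n}\pmod 3$ to conclude
\[
\prod_{n\ge 1}(1-q^n)^{3(9^s-1)} \equiv \prod_{n\ge 1}(1-q^{3n})^{9^s-1}\pmod 3,
\]
and rewrite the right side as $\prod_{n\ge 1}(1-q^{3n})^{9^s}\cdot\prod_{n\ge 1}(1-q^{3n})^{-1}$. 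Iterating Frobenius $2s$ more times gives $(1-q^{3n})^{9^s}\equiv 1-q^{3\cdot 9^s n}\pmod 3$, which yields
\[
\Delta^{(9^s-1)/8}\equiv q^{(9^s-1)/8}\left(\prod_{n\ge 1}(1-q^{3\cdot 9^s n})\right)\left(\prod_{n\ge 1}\frac{1}{1-q^{3n}}\right)\pmod 3.
\]

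To finish, I would invoke the classical identity $\prod_{n\ge 1}(1-q^{3n})^{-1}=\sum_{m\ge 0}p(m)q^{3m}$ together with the definition $\prod_{n\ge 1}(1-q^{3\cdot 9^s n})=1+\sum_{j\ge 1}r_s(j)q^{3\cdot 9^s j}$, then multiply out and extract the coefficient of $q^n$. The contribution from the leading $1$ in the $r_s$ product is $p\bigl((n-(9^s-1)/8)/3\bigr)$, while the contribution from the term $r_s(j)q^{3\cdot 9^s j}$ is $r_s(j)\,p\bigl((n-(9^s-1)/8)/3-9^s j\bigr)$, with the convention that $p(x)=0$ when $x$ is not a nonnegative integer. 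Summing over $j$ and matching with $a_s(n)$ produces the congruence claimed in the lemma.

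There is no real obstacle here beyond careful bookkeeping; the one thing to check is that $(9^s-1)/8$ is an integer (so that the shift is by an integer amount) and that Frobenius genuinely collapses $(1-q^{3n})^{9^s}$ to $1-q^{3\cdot 9^s n}$ modulo $3$, which follows by induction on $s$ from the base case $(1-q^{3n})^3\equiv 1-q^{9n}\pmod 3$.
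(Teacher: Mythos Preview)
Your proof is correct and is essentially the same as the paper's: both expand $\Delta^{(9^s-1)/8}$ via the product formula, apply Frobenius modulo $3$ to split the eta-product into $\prod(1-q^{3\cdot 9^s n})\cdot\prod(1-q^{3n})^{-1}$, and then compare coefficients using the definitions of $r_s(j)$ and $p(m)$. You have simply made explicit the intermediate Frobenius steps that the paper compresses into a single displayed line.
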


\begin{proof}
We may compute
\begin{align*}
\Delta^{\frac{9^s-1}8} &= \left(q\prod_{n=1}^\infty(1-q^n)^{24}\right)^{\frac{9^s-1}8} \\
&\equiv q^{\frac{9^s-1}8}\prod_{n=1}^\infty(1-q^{3\cdot 9^sn})\prod_{n=1}^\infty\frac1{1-q^{3n}} \pmod 3 \\
&\equiv \left(1+\sum_{j=1}^\infty r_s(j)q^{3\cdot 9^sj}\right)\left(\sum_{k=0}^\infty p(k)q^{3k+\frac{9^s-1}8}\right) \pmod 3.
\end{align*}
The lemma follows by comparing coefficients.
\end{proof}
Finally, we require the following lemma concerning nonzero coefficients of $\Delta^{\frac{9^s-1}8}$.
\begin{lemma}
\label{nonzero}
There are fixed nonnegative integers $k$ and $\ell$, not both zero, such that the following holds. Let $p_1,\dotsc,p_k$ be distinct primes which satisfy $p_i\equiv 2\pmod3$ and $p_i\not\equiv 8\pmod9$ for all $i$ . Let $q_1,\dotsc,q_\ell$ be distinct primes such that for all $j$ we have $q_j\equiv 1\pmod{9^{s+1}}$ and $q_j$ does not split in the splitting field of $X^3-3$. Then for any $n_0$ satisfying $(n_0, p_1\cdots p_k q_1\cdots q_\ell)=1$  such that $\tau(n_0)\not\equiv 0\pmod3$, there is some $d|q_1\dotsb q_\ell$ for which we have $a_s(n_0p_1\dotsb p_kd)\not\equiv0\pmod3$. 
\end{lemma}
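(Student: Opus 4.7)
The plan is to exploit the Hecke algebra structure of Theorem~\ref{bk} to produce a product of Hecke operators that transforms $\Delta^{(9^s-1)/8}$ into $\pm\Delta$ modulo $3$, and then compare the coefficient of $q^{n_0}$ on both sides. I would begin by setting $f := \Delta^{(9^s-1)/8} \in S(\fld_3)$, which is nonzero since its Fourier expansion begins $q^{(9^s-1)/8} + \cdots$. Because the Hecke algebra acts locally nilpotently, the set of pairs $(i,j)$ for which $(T_2')^i (T_7')^j f \not\equiv 0 \pmod 3$ is finite, so I can pick $k, \ell \geq 0$ with $k+\ell$ maximal among such pairs. Then $(T_2')^k (T_7')^\ell f$ is a nonzero cusp form annihilated by both $T_2'$ and $T_7'$; since Theorem~\ref{bk} identifies $(T_2', T_7')$ with the maximal ideal $(x,y)$ of the Hecke algebra and every $T_p'$ lies in $(x,y)$, this form is killed by every $T_p'$, and so must equal $\pm\Delta$ by the characterization of the socle in Section~\ref{nilpotency}.

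For any primes $p_1, \ldots, p_k$ and $q_1, \ldots, q_\ell$ meeting the lemma's congruence and splitting hypotheses, Theorem~\ref{bk} gives $T_{p_i}' - T_2' \in (x,y)^2$ and $T_{q_j}' - T_7' \in (x,y)^2$. Expanding the product
\[
T_{p_1}' \cdots T_{p_k}' \, T_{q_1}' \cdots T_{q_\ell}' \;=\; (T_2')^k (T_7')^\ell + R,
\]
each non-leading monomial in $R$ absorbs at least one correction factor from $(x,y)^2$, so $R \in (x,y)^{k+\ell+1}$. By maximality of $k+\ell$, every monomial $(T_2')^i (T_7')^j$ with $i+j > k+\ell$ annihilates $f$, and hence the whole ideal $(x,y)^{k+\ell+1}$ annihilates $f$; so $Rf = 0$ and $T_{p_1}' \cdots T_{p_k}' T_{q_1}' \cdots T_{q_\ell}' f = \pm\Delta$ in $S(\fld_3)$.

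To conclude, I would compute the coefficient of $q^{n_0}$ on both sides. Because $(n_0, p_1\cdots p_k q_1\cdots q_\ell) = 1$, the second term $p^{w-1}a(n/p)$ in the Hecke formula vanishes at every stage of the iteration, and a direct induction using $T_{p_i}' = T_{p_i}$ and $T_{q_j}' = 1 + T_{q_j}$ shows that this coefficient equals $\sum_{d \mid q_1 \cdots q_\ell} a_s(n_0 p_1 \cdots p_k d)$. The right-hand side $\pm\Delta$ contributes $\pm\tau(n_0) \not\equiv 0 \pmod 3$ by hypothesis, so at least one divisor $d$ must yield $a_s(n_0 p_1 \cdots p_k d) \not\equiv 0 \pmod 3$, as required. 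A minor edge case in which the construction produces $k = \ell = 0$ (namely $s = 1$, where $f = \Delta$) can be handled by artificially padding to $(k,\ell) = (0,1)$ and noting that the divisor $d = 1$ already works since $a_1(n_0) = \tau(n_0) \not\equiv 0 \pmod 3$.

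The main obstacle I anticipate is the algebraic bookkeeping in the second paragraph: one must carefully verify that the error operator $R$ truly lies in $(x,y)^{k+\ell+1}$ when the product is expanded, and that this ideal indeed annihilates $f$ given only the pointwise local nilpotency of the individual $T_p'$. Both steps are conceptually routine once one works in the power series ring $\fld_3[[x,y]]$ and uses that the submodule generated by $f$ is finite-dimensional, but keeping the filtration levels straight throughout requires some care.
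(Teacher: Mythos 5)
Your proof is correct and follows essentially the same route as the paper: use Theorem~\ref{bk} to replace each $T_{p_i}'$ by $(T_2')$ and each $T_{q_j}'$ by $(T_7')$ modulo operators annihilating $\Delta^{(9^s-1)/8}$, conclude that the product of primed Hecke operators sends this form to $\pm\Delta$, and compare $q^{n_0}$-coefficients to get $\sum_{d\mid q_1\cdots q_\ell}a_s(n_0p_1\cdots p_kd)\equiv\pm\tau(n_0)\not\equiv0\pmod 3$. Your explicit $(x,y)^{k+\ell+1}$ bookkeeping and the treatment of the $s=1$ edge case are details the paper leaves implicit, but the underlying argument is the same.
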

\begin{remark}
Given values of $k$, $\ell$, and $s$, it is always possible to find corresponding primes $p_1,\ldots,p_k$ and $q_1,\ldots, q_\ell$; indeed by the Chebotarev density theorem, the primes that are valid choices for $p_i$ have density $1/3$ within the primes, and the primes that are valid choices for $q_j$ have density $1/(2\cdot 3^{s+1})$.
\end{remark} 
\begin{proof}[Proof of Lemma~\ref{nonzero}.]
Identify the Hecke algebra modulo 3 with the ring $\fld_3[[x,y]]$ as described in Theorem \ref{bk} and let $p_1,\dotsc,p_k,q_1,\dotsc,q_\ell$ be such that $T_{p_i}'\equiv x\pmod{(x,y)^2}$ and $T_{q_j}'\equiv y\pmod{(x,y)^2}$, and that 
\[
T_{p_1}'\dotsb T_{p_k}'T_{q_1}'\dotsb T_{q_\ell}'\Delta(z)^{\frac{9^s-1}8}\equiv\pm\Delta\pmod{3}.
\]

Note that having fixed $k$ and $l$ so that such $p_i$s and $q_j$s exist, any such sequence of primes satisfying the hypotheses of Lemma~\ref{nonzero} satisfy the same equation. By comparing coefficients, we conclude that for any $n_0$ not divisible by the $p_i$s and $q_j$s with a nonzero corresponding coefficient of $\Delta$, we have
\begin{equation}
\sum_{d|q_1\dotsb q_\ell}a_s(n_0p_1\dotsb p_kq_1\dotsb q_\ell)\not\equiv0\pmod 3.
\end{equation}
Lemma ~\ref{nonzero} follows.
\end{proof}

Finally, combining Lemma~\ref{nonzero} with Lemmas~\ref{manyfromone} and~\ref{recursion}, we prove Theorem~\ref{mainthm3}.

\begin{proof}[Deduction of Theorem~\ref{mainthm3} from Lemma~\ref{nonzero}.]
Choose $k$ and $\ell$ such that $(T_2')^k (T_7')^\ell \Delta^{\frac{9^s-1}{8}}=\pm \Delta$ let $p_1,\ldots,p_k$, $q_1,\ldots, q_\ell$ be as in the statement of Lemma \ref{nonzero}. Then, by Lemma \ref{nonzero}, given any $n_0$ with $\tau(n_0)\not\equiv 0\pmod 3$ and $(n_0,p_1\cdots p_k q_1\cdots q_\ell)=1$ we have some $d\vert q_1\cdots q_\ell$ such that $a_s(n_0p_1\cdots p_k d)\not\equiv 0\pmod 3$. By Lemma~\ref{recursion}, for this $d|q_1\dotsb q_\ell$, we may write
\[
a_s(n_0p_1\dotsb p_kd)\equiv p\left(\frac{n_0p_1\dotsb p_kd-\frac{9^s-1}8}{3}\right)+\sum_{j=1}^\infty r_s(j)p\left(\frac{n_0p_1\dotsb p_kd-\frac{9^s-1}8}3-9^sj\right)\pmod3.
\]
Since $a_s(p_1\dotsb p_kd)\not\equiv0\pmod3$, we conclude that $p\left(\frac{p_1\dotsb p_kd-\frac{9^s-1}8}3-9^sj\right)\not\equiv0\pmod3$ for some $j\geq 0$, so for some $n$ satisfying $n\equiv \frac{p_1\dotsb p_kd-\frac{9^s-1}8}3 \pmod{9^s}$ we have $p(n)\not \equiv 0\pmod 3$.  Hence, by Lemma \ref{manyfromone} it suffices to show that as we vary $n_0$ the quantity $\frac{n_0p_1\dotsb p_kd-\frac{9^s-1}8}3$ covers all residue classes modulo $9^s$. Since $d\equiv1\pmod{9^{s+1}}$ regardless of its precise factorization, it may be dropped. To show that $\frac{n_0p_1\dotsb p_k-\frac{9^s-1}8}3$ covers all residue classes as we vary $n_0$, we note first note the standard congruence for the $\tau$ function,
$$
\tau(n)\equiv \sigma_1(n) \pmod 3,
$$
is valid for all $n\not \equiv 0\pmod 3$, and in particular $\tau(p)\equiv 2\pmod 3$ when $p\equiv 1\pmod 3$. In addition, we note that $k$ must be even, since $a_s(n)\not\equiv 0\pmod 3$ implies $n\equiv 1 \pmod 3$, but if $k$ is odd we have $a_s(p_1\cdots p_k D)\not\equiv 0 \pmod 3$ even though $p_1\cdots p_k D \equiv 2 \pmod 3$, a contradiction. So we have that $p_1\cdots p_k\equiv 1 \pmod 3$, so to show that the terms $\frac{n_0p_1\dotsb p_k-\frac{9^s-1}8}3$ cover all residue classes we need only find $n_0$ in all residue classes $n\pmod{ 9^{s+1}}$ satisfying $n\equiv 1\pmod 3$. But to do so we may simply take $n_0$ to be any prime satisfying $n_0\equiv n \pmod {9^{s+1}}$, of which there are infinitely many. Theorem \ref{mainthm3} follows by applying Lemma \ref{manyfromone}.
\end{proof}

\nocite{*}
\bibliographystyle{plain}
\bibliography{biblio}
\end{document}